\newcommand{\Lip}{\mathrm{Lip}}
\newcommand{\Z}{\mathbb{Z}}
\newcommand{\N}{\mathbb{N}}
\newcommand{\R}{\mathbb{R}}
\newcommand{\T}{\mathbb{T}}
\newcommand{\unitsphere}{\mathbb{S}}
\renewcommand{\epsilon}{\varepsilon}
\newcommand{\bkav}[1]{\frac{1}{n}#1^{(n)}}
\newcommand{\dd}{\,\mathrm{d}}   
\newcommand{\innt}{\,\mathrm{int}}
\newcommand{\relinnt}{\,\mathrm{relint}}
\newcommand{\per}{\,\mathrm{per}}
\newcommand{\norm}[1]{\left\lVert#1\right\rVert}
\newcommand{\conv}{\,\mathrm{conv}}
\newcommand{\Orbit}{\mathcal{O}}
\DeclareMathOperator{\Ima}{Im}
\definecolor{darkblue}{rgb}{0.0, 0.0, 0.55}\usepackage{fancyhdr}
\renewcommand*{\backref}[1]{}
\renewcommand*{\backrefalt}[4]{\quad \tiny 
  \ifcase #1 (\textbf{NOT CITED.})%
  \or    (Cited on page~#2.)%
  \else   (Cited on pages~#2.)%
  \fi}
\def\MRbibitem{\@ifnextchar[\my@lbibitem\my@bibitem}
\def\mybiblabel#1#2{\@biblabel{{\hyperref{http://www.ams.org/mathscinet-getitem?mr=#1}{}{}{#2}}}}
\def\myhyperanchor#1{\Hy@raisedlink{\hyper@anchorstart{cite.#1}\hyper@anchorend}}
\def\my@lbibitem[#1]#2#3#4\par{%
  \item[\mybiblabel{#2}{#1}\myhyperanchor{#3}\hfill]#4%
  \@ifundefined{ifbackrefparscan}{}{\BR@backref{#3}}%
  \if@filesw{\let\protect\noexpand\immediate
    \write\@auxout{\string\bibcite{#3}{#1}}}\fi\ignorespaces%
}
\def\my@bibitem#1#2#3\par{%
  \refstepcounter\@listctr
  \item[\mybiblabel{#1}{\the\value\@listctr}\myhyperanchor{#2}\hfill]#3%
  \@ifundefined{ifbackrefparscan}{}{\BR@backref{#2}}%
  \if@filesw\immediate\write\@auxout
    {\string\bibcite{#2}{\the\value\@listctr}}\fi\ignorespaces%
}
\newtheorem{thm}{Theorem}[section]
\newtheorem{prop}[thm]{Proposition}
\newtheorem{lemma}[thm]{Lemma}
\newtheorem{coro}[thm]{Corollary}
\theoremstyle{remark}
\theoremstyle{definition}
\newtheorem{qstn}[thm]{Question}
\begin{document}

\title{\textbf{Generic Rotation Sets}}

\author{\small{Sebasti\'an Pavez-Molina}}

\markboth{S Pavez-Molina}{Generecity of Strictly Convex Rotation Sets}

\date{}

\maketitle

\begin{abstract}
Let $(X,T)$ be a topological dynamical system. Given a continuous vector-valued function $F \in C(X, \R^{d})$ called a \textit{potential}, we define its rotation set $R(F)$ as the set of integrals of $F$ with respect to all $T$-invariant probability measures, which is a convex body of $\R^{d}$. In this paper, we study the geometry of rotation sets. We prove that if $T$ is a non-uniquely ergodic topological dynamical system with a dense set of periodic measures, then the map $R(\cdot)$ is open with respect to the uniform topologies. As a consequence, we obtain that the rotation set of a generic potential is strictly convex and has $C^{1}$ boundary. Furthermore, we prove that the map $R(\cdot)$ is surjective, extending a result of Kucherenko and Wolf.
\end{abstract}


\section{Introduction}
Let $(X,T)$ be a topological dynamical system, that is, a compact metric space $X$ together with a continuous map $T: X \to X$. We denote by $\mathcal{M}_{T}$ the set of all $T$-invariant probability measures, which is convex and weak-$\star$ compact. Given a continuous potential $F: X \to \R^{d}$, we define its \textit{rotation set} as:
$$
R(F) = \left \{ \int F \dd \mu : \mu \in \mathcal{M}_{T} \right \} .
$$
This is a convex body in $\R^{d}$, that is, a non-empty compact and convex subset of $\R^{d}$.

This definition originates from the rotation theory on the torus \cite{MK}: if $f: \T^{d} \to \T^{d}$ is continuous, homotopic to the identity with lift $\widetilde{f}: \R^{d} \to \R^{d}$, we consider the displacement function $F(x):= \widetilde{f}(x)-x$. The corresponding rotation set $R(F)$ yields important information about the dynamics of $f$. Note that in the one-dimensional case, $R(F)= \{ \rho(\widetilde{f}) \}$, where $\rho(\cdot)$ is the Poincar\'e rotation number. For $d \geq 2$, it is known that generically the rotation set is given by a
rational polygon \cite{P}, and there are rotation sets with smooth boundary
points \cite{BCH}.

Returning to the general context, Ziemian \cite{Zi} studied the situation where the dynamics is a subshift of finite type (SFT) and the potential $F$ is locally constant, and proved that in this case the rotation set is a polytope. On the other hand, Kucherenko and Wolf \cite{KW} proved that if $T$ is a SFT then every convex body of $\R^{d}$ appears as a rotation set of a  continuous potential.

Ergodic optimization \cite{Je1,Je2} is another motivation for the study of the rotation set. Given a function  $f \in C(X)$, one is interested in the quantity
\begin{equation}\label{beta}
\beta(f)=\sup_{\mu \in \mathcal{M}_{T}} \int f \dd \mu \hspace{0.2cm} ,
\end{equation}
called the \textit{maximum ergodic average}. Any measure $\mu \in \mathcal{M}_{T}$ satisfying $\int f \dd \mu = \beta(f)$ is called an $f$-\textit{maximizing measure}. The main problem of ergodic optimization is to identify maximizing measures and to understand their properties. For generic functions in the space $C(X)$, the maximizing measure is unique; furthermore, the same holds for other spaces of functions: see \cite[Theorem 3.2]{Je1}. Note that in this case the (one-dimensional) rotation set is $R(f)=[\alpha(f), \beta(f)]$, where $\alpha(f)=-\beta(-f)$ is the \textit{minimum ergodic average}.

Consider the more general problem of computing the maximum ergodic average $\beta(f)$ for all functions $f$ in a given finite-dimensional subspace of $C(X)$,  say with generators $f_{1}, ... ,f_{d}$. If $f= \sum_{j=1}^{d} \alpha_{j} f_{j}$ then we have:
$$
\beta(f)=\sup_{\vec{x} \in R(F)} (\alpha_{1}, ..., \alpha_{d}) \cdot \vec{x}
$$
where $F=(f_{1}, f_{2}, ... ,f_{d})$. Therefore, the problem reduces to the study of the rotation set of $F$, which is called Vectorial Ergodic Optimization \cite[section 2]{B}.

Let us describe one of the first examples of rotation sets, introduced by Jenkinson \cite{Je4}. Let $X=\R / \Z$ be the circle, $T$ be the doubling map, and $F(x)=(\cos(2\pi x), \sin(2\pi x))$ be  the potential. The corresponding rotation set $R(F)$ is called \textit{the fish}. Validating experimental results of Jenkinson, Bousch \cite{Bo1} proved that the fish is strictly convex and every point on its boundary is the integral of $F$ with respect to a unique $T$-invariant probability measure. Furthermore, he proved that the fish has a dense subset of corners (points of non-differentiability), and each corner is the integral of $F$ with respect to a unique $T$-invariant porbability measure, which is \textit{periodic}, that is, supported on a single periodic orbit.

It is natural to ask whether these characteristics of the fish are typical among rotation sets: see \cite[section 2]{B} for further discussion. In this work, we give a partial answer to this question. Under a mild hypothesis on the dynamics $T$ (which is satisfied for the doubling map and SFT), we prove that for generic continuous potentials, the rotation set is strictly convex and (unlike the fish) has a $C^{1}$ boundary. This genericity result is obtained as a corollary of our main theorem, which reads as follows:

\begin{thm}\label{thmC}
Let $T: X \to X$ be a non-uniquely ergodic topological dynamical system with dense set of periodic measures. Then the map $$R: (C(X, \R^{d}), \norm{\cdot}_{\infty}) \to (\mathit{CB}(\R^{d}),d_{H})$$ that associates to each potential $F$ its rotation set $R(F)$ is continuous, open, and surjective.
\end{thm}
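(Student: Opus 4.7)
\textbf{Continuity} is Lipschitz and follows directly from the definition: for any $\mu \in \mathcal{M}_T$ and any $F,G \in C(X,\R^d)$, the inequality $\norm{\int F\dd\mu - \int G\dd\mu} \leq \norm{F-G}_{\infty}$ holds, and passing to the Hausdorff distance yields $d_H(R(F),R(G)) \leq \norm{F-G}_{\infty}$.

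\textbf{Openness} is the main content. Fix $F$ and $\epsilon>0$; the goal is to show that every convex body $K$ sufficiently Hausdorff-close to $R(F)$ equals $R(G)$ for some $G$ with $\norm{G-F}_{\infty}<\epsilon$. I would first treat the case where $K=\conv(v_1,\ldots,v_k)$ is a polytope with $d_H(K,R(F))$ small. Using the density of periodic measures, choose periodic $\mu_1,\ldots,\mu_k$ so that $\int F\dd\mu_i$ is close to $v_i$; since the orbits $\supp(\mu_i)$ are finite and pairwise disjoint, a Urysohn bump-function construction yields $\phi \in C(X,\R^d)$ supported in a small neighborhood of $\bigcup_i \supp(\mu_i)$, satisfying $\int\phi\dd\mu_i = v_i - \int F\dd\mu_i$ for each $i$, with $\norm{\phi}_{\infty}$ controlled by the initial error. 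Then $G:=F+\phi$ satisfies $\int G\dd\mu_i=v_i$, so $K \subset R(G)$.

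The main obstacle is the reverse inclusion $R(G)\subset K$, equivalently $\sup_{\mu\in\mathcal{M}_T} u\cdot\int G\dd\mu \leq \max_i u\cdot v_i$ for every unit vector $u$: without care, some other invariant measure could produce $\int G\dd\mu$ outside $K$. Here non-unique ergodicity is crucial, as it supplies auxiliary ergodic measures disjoint from the $\mu_i$ against which further small corrective perturbations can be calibrated to drive any spurious direction back into $K$. The delicate tuning---arranging that for every direction $u$ some $\mu_i$ is simultaneously $(u\cdot G)$-maximizing---should be handled by a support-function argument in the spirit of Ma\~n\'e's sub-action theory for ergodic optimization. Once polytopes close to $R(F)$ are realized, general convex bodies in a neighborhood of $R(F)$ are recovered by approximating $K$ by polytopes and constructing the realizing potentials in a Cauchy-sequence manner in the complete space $C(X,\R^d)$.

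\textbf{Surjectivity} then follows from openness by an iterative scheme. Given any convex body $K\subset\R^d$, I would inductively build a sequence $(F_n)$ in $C(X,\R^d)$ with $\norm{F_{n+1}-F_n}_{\infty}$ geometrically decaying and $d_H(R(F_n),K)\to 0$, openness supplying at each stage a controlled perturbation shrinking the current Hausdorff distance by a definite fraction. Completeness of $C(X,\R^d)$ gives a limit $F$, and continuity of $R$ yields $R(F)=K$, extending the Kucherenko--Wolf realization to the present setting.
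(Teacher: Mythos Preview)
Your continuity argument is fine and matches the paper. Your overall architecture for openness and surjectivity---perturb near periodic orbits to hit prescribed vertices, then iterate and pass to a limit---is also broadly the same as the paper's. The genuine gap is precisely where you flag it: the reverse inclusion $R(G)\subset K$.

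Your proposed mechanism for this step does not work. Adding a bump $\phi$ supported near the orbits $\supp(\mu_i)$ will certainly force $\int G\dd\mu_i=v_i$, but there is no reason the resulting rotation set stays inside $K$: any invariant measure giving mass to the bump region can be pushed out, and there are uncountably many directions $u$ to control simultaneously. The suggestion that ``auxiliary ergodic measures'' and ``further small corrective perturbations'' will drive spurious directions back into $K$ is not a method---each correction threatens to undo a previous one, and nothing here converges. Invoking Ma\~n\'e sub-action theory in the vector-valued setting is also problematic: as the paper notes, a genuine vectorial Ma\~n\'e lemma fails already for the fish potential. Finally, non-unique ergodicity is not what resolves this step; in the paper it is used only to guarantee that $R(F)$ is (after a tiny perturbation) not a singleton, so that enough distinct periodic orbits exist.

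The idea you are missing is to control $R(G)$ via the \emph{image} of a cohomologous representative rather than via measure-by-measure corrections. The paper's approximate Ma\~n\'e lemma (Lemma~\ref{lemaaprox}) says that the Birkhoff average $\frac{1}{n}F^{(n)}$, which is cohomologous to $F$, has image contained in $B_\varepsilon(R(F))\subset\innt(K)$ for large $n$. One then perturbs $\frac{1}{n}F^{(n)}$ (not $F$) near the periodic orbits, replacing its values there by the targets $y_i\in\innt(K)$ via a partition of unity. The perturbed function $\widetilde G$ is pointwise a convex combination of values already in $\innt(K)$, so $\Ima(\widetilde G)\subset\innt(K)$ and hence $R(\widetilde G)\subset\innt(K)$ automatically. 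Adding back the coboundary $F-\frac{1}{n}F^{(n)}$ recovers a function close to $F$ with the same rotation set. This image-containment trick is the key lemma, and without it the reverse inclusion remains open.
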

Here, $C(X, \R^{d})$ is endowed with the uniform norm, and $\mathit{CB}(\R^{d})$ is the set of convex bodies of $\R^{d}$ endowed with the Hausdorff distance  (see section \ref{sec2} for more details). Continuity of the map $R$ is trivial. Surjectivity of $R$ was already known when $T$ is a SFT: see \cite[Theorem 2]{KW}.

The hypothesis of denseness of periodic measures holds for any dynamical system with the specification property (e.g., uniformly expanding transformations, SFT, and Anosov diffeomorphisms). It also holds for many classes of non-hyperbolic dynamics, for example, $\beta$ shifts, $S$-gap shifts, and isolated non-trivial transitive sets of $C^{1}$-generic diffeomorphisms: see \cite{GK}.

As a consequence of our main result, we have:
\begin{coro} \label{thmA}
Let $T: X \to X$ be a non-uniquely ergodic topological dynamical system with dense set of periodic measures. Then there exists a residual subset $\mathcal{R}$ of $C(X,\R^{d})$ such that $R(F)$ is strictly convex and has $C^{1}$ boundary for all $F \in \mathcal{R}$.
\end{coro}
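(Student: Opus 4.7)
The plan is to deduce the corollary directly from Theorem \ref{thmC} combined with a classical genericity statement in convex geometry: the subset $\mathcal{S} \subset \mathit{CB}(\R^{d})$ consisting of convex bodies that are simultaneously strictly convex and of class $C^{1}$ (i.e.\ admit a unique supporting hyperplane at every boundary point) is a dense $G_{\delta}$ in $(\mathit{CB}(\R^{d}), d_{H})$. This residuality is classical, going back to Klee and refined by Gruber, Schneider, and Zamfirescu; one exhibits each property as a countable intersection of explicit open sets and proves density of each class via standard smoothing — for the $C^{1}$ property by Minkowski sum with a small Euclidean ball, for strict convexity by intersection with many small balls centered at boundary points — and the intersection of two residual sets in a Baire space remains residual.

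Set $\mathcal{R} := R^{-1}(\mathcal{S})$. Since $C(X,\R^{d})$ is a Banach space, hence a Baire space, it suffices to show $\mathcal{R}$ is a dense $G_{\delta}$. The $G_{\delta}$ property is immediate from continuity of $R$ (Theorem \ref{thmC}), applied to any $G_{\delta}$ presentation of $\mathcal{S}$. For density, let $U \subset C(X,\R^{d})$ be nonempty and open. By openness of $R$, the image $R(U)$ is a nonempty open subset of $\mathit{CB}(\R^{d})$, so $R(U) \cap \mathcal{S} \neq \emptyset$ by density of $\mathcal{S}$. Choosing any $F \in U$ with $R(F) \in \mathcal{S}$ gives $F \in \mathcal{R} \cap U$.

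There is essentially no obstacle once Theorem \ref{thmC} is in hand: the corollary is a formal transfer of a residual subset of the codomain to the domain through a continuous, open map. All the substance of the result lies in the openness of $R$, which is the new content of Theorem \ref{thmC}; the genericity of strictly convex, $C^{1}$ bodies in $\mathit{CB}(\R^{d})$ is imported as a black box. Note that surjectivity of $R$ is not needed for this argument — openness alone drives the density step — but surjectivity does confirm that every strictly convex $C^{1}$ body actually occurs as some $R(F)$, so the residual family exhausts the target class.
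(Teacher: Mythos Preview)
Your argument is correct and matches the paper's own proof exactly: pull back the residual set of strictly convex, $C^{1}$ convex bodies under the continuous, open map $R$ from Theorem~\ref{thmC}. You spell out in more detail why this pullback remains residual (separating the $G_{\delta}$ step via continuity from the density step via openness), but the approach is identical.
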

\begin{proof}[Proof of Corollary \ref{thmA}]
The set of convex bodies which are strictly convex with $C^{1}$ boundary is residual \cite[p. 133]{S}. Therefore the pre-image under $R$ of this set is also residual, since $R$ is continuous and open by Theorem \ref{thmC}.
\end{proof}
The $C^{1}$ regularity in the corollary cannot be improved in this case: for generic convex bodies the boundary is not $C^{1+\alpha}$, for any $\alpha>0$ : see \cite{KlN}.

It is natural to ask whether Theorem \ref{thmC} holds for spaces of more regular functions, for example, Lipschitz functions. The answer is negative: see section \ref{sec6.2}.

\section{Preliminaries} \label{sec2}
We say that a non-empty subset $K \subset \R^{d}$ is a \emph{convex body} if it is compact and convex. We denote the set of convex bodies by $\mathit{CB}(\R^{d})$, and by $\mathit{CB}_{\circ}(\R^{d})$ the set of convex bodies with non-empty interior. Aditionally, given a convex body $K$,  we denote by $\innt(K)$ its interior and $\relinnt(K)$ its relative interior. We endow $\mathit{CB}(\R^{d})$ with a structure of metric space, given by the \textit{Hausdorff distance} defined by:
$$
d_{H}(K,L)=\max  \left \{ \sup_{x \in K}  \inf_{y \in L} \norm{x-y}, \sup_{y \in L} \inf_{x \in K} \norm{x-y} \right \}.
$$
This definition only requires $K,L$ to be compact. Also, the $\sup$ and $\inf$ can be replaced by $\max$ and $\min$ due to compactness. This metric turns $\mathit{CB}(\R^{d})$ into a complete, locally compact metric space \cite[p. 62]{S}. The Hausdorff distance between two convex bodies can also be obtained just considering their boundaries, namely, if $K,L$ are two convex bodies, then 
$d_{H}(K,L)=d_{H}(\partial  K, \partial L)$ \cite[p. 61]{S}. A useful lemma that will be used later is the following:
\begin{lemma} \label{preadjustlemma}
Let $K \in \mathit{CB}(\R^{d})$ and $0<\delta<1$. Then there exists $K_{\delta} \in \mathit{CB}(\R^{d})$ such that $K_{\delta} \subset \relinnt(K)$ and $d_{H}(K_{\delta}, K) < \delta$.
\end{lemma}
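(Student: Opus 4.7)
The plan is to obtain $K_\delta$ by a small contraction of $K$ toward a point in its relative interior. The key tool is the classical ``segment property'' of the relative interior: if $p \in \relinnt(K)$ and $q \in K$, then every point of the form $(1-t)q + tp$ with $t \in (0,1]$ lies in $\relinnt(K)$.

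First I would dispose of the degenerate case where $K$ is a single point: then $\relinnt(K)=K$ and one may simply take $K_\delta = K$. Otherwise $\mathrm{diam}(K)>0$. I would pick any point $p \in \relinnt(K)$ (non-empty for any convex body, as $K$ has well-defined relative interior inside its affine hull), fix a scalar $t \in (0,1)$ to be chosen, and define
\[
K_\delta \;=\; (1-t)K + tp \;=\; \{(1-t)q + tp : q \in K\}.
\]
This is a convex body, being the image of $K$ under an affine contraction. By the segment property cited above, every point $(1-t)q+tp$ with $q\in K$ lies in $\relinnt(K)$, so $K_\delta \subset \relinnt(K)$.

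Next I would estimate the Hausdorff distance. Since $K_\delta \subset K$, one of the two suprema in the definition of $d_H$ vanishes, so
\[
d_H(K_\delta, K) \;=\; \sup_{q \in K} \inf_{x \in K_\delta} \norm{x-q}.
\]
For each $q \in K$ the point $(1-t)q+tp \in K_\delta$ is within distance $t\norm{q-p} \le t\cdot \mathrm{diam}(K)$ of $q$, hence $d_H(K_\delta,K) \le t \cdot \mathrm{diam}(K)$. Choosing $t = \min\bigl(1/2,\;\delta/(1+\mathrm{diam}(K))\bigr)$ ensures both $t \in (0,1)$ and $d_H(K_\delta, K) < \delta$.

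There is essentially no genuine obstacle; the only subtlety is remembering that for convex bodies of deficient dimension one must work with the relative interior rather than the interior, and that the contraction must be toward a point already in $\relinnt(K)$ so that the segment property applies. Handling the one-point case separately (or, equivalently, allowing $t=0$ there) is the only edge case to check.
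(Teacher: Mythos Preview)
Your proof is correct and follows essentially the same idea as the paper: contract $K$ toward a point of its relative interior (the paper first translates so that $0\in\relinnt(K)$ and sets $K_\delta=(1-\tfrac{\delta}{kd})K$). The only cosmetic difference is that the paper bounds $d_H(K_\delta,K)$ via support functions and \cite[Lemma 1.8.14]{S}, whereas you give the direct elementary estimate $d_H(K_\delta,K)\le t\cdot\mathrm{diam}(K)$; both are equivalent here.
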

\begin{proof}
Define the \textit{support function} of an arbitrary $L \in \mathit{CB}(\R^{d})$ by 
\begin{equation*}
\displaystyle h_{L}(u)= \sup_{x \in L} x\cdot u ,
\end{equation*}
and denote $\overline{h}_{L}=h_{L} |_{\unitsphere^{n-1}}$. Applying a translation if necessary, suppose that $0 \in \relinnt(K)$. Define $K_{\delta}= \left (1 - \frac{\delta}{kd} \right )K$, where $d= \sup_{x \in \partial K} \norm{x}$ and $k \in \N$ is such that $\frac{\delta}{kd}<1$. It is clear that $K_{\delta} \subset \relinnt(K)$, and using \cite[Lemma 1.8.14]{S} :
\begin{equation*}
\begin{split}
d_{H}(K_{\delta}, K) &=\norm{\overline{h}_{K_{\delta}}-\overline{h}_{K}}_{\infty} \\
&\leq \frac{\delta}{kd}\norm{\overline{h}_{K}}_{\infty} \\
&= \frac{\delta}{kd} \sup_{x \in \partial K } \norm{x} < \delta. \qedhere
\end{split}
\end{equation*} 
\end{proof}

Let $X$ be a compact metric space and let $T: X \to X$ be a continuous map. Given $x \in X$, we denote by $\Orbit(x)=\{T^{j}(x) : j \geq 0\}$ its positive orbit. For a periodic point $x \in X$, we denote by $\mu_{\Orbit(x)}$ the unique $T$-invariant probability measure supported in $\Orbit(x)$. These measures are called \textit{periodic}, and $\mathcal{M}_{T}^{\textrm{per}}$ denotes the set of periodic measures. 

Letting $F: X \to \R^{d}$ be a continuous potential, we use the following notation for Birkhoff sums:
$$
F^{(n)}:= F+F \circ T+...+F \circ T^{n-1}.
$$
Recall that the \emph{rotation set} of $F$ is defined as:
$$
R(F)=\left \{ \int F \dd \mu : \mu \in \mathcal{M}_{T} \right  \}.
$$
This is a compact convex subset of $\R^{d}$. Also, define the \emph{periodic rotation set} of $F$ as:
$$
R_{\per}(F)=\left \{ \int F \dd \mu : \mu \in \mathcal{M}_{T}^{\textrm{per}} \right \}.
$$
Clearly if $\mathcal{M}_{T}^{\textrm{per}}$ is dense in $\mathcal{M}_{T}$, then $R_{\per}(F)$ is dense in $R(F)$. Let us prove the continuity of the map $R$:
\begin{prop}\label{contfish}
The map $R:(C(X, \R^{d}), \norm{}_{\infty}) \rightarrow \mathit{CB}(\R^{d},d_{H})$ is continuous.
\end{prop}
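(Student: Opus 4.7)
The plan is to show that $R$ is in fact $1$-Lipschitz, which is much stronger than continuity. The proof is a one-line observation together with an unpacking of the definition of Hausdorff distance.

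First, I would note that for every $T$-invariant probability measure $\mu$ and any two potentials $F,G \in C(X,\R^{d})$,
\begin{equation*}
\left\lVert \int F \dd\mu - \int G \dd\mu \right\rVert
= \left\lVert \int (F-G) \dd\mu \right\rVert
\leq \int \lVert F-G \rVert \dd\mu
\leq \lVert F-G \rVert_{\infty},
\end{equation*}
using the fact that $\mu$ is a probability measure.

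Now, for any point $p = \int F \dd\mu \in R(F)$, the point $q := \int G \dd\mu$ lies in $R(G)$ and by the estimate above satisfies $\lVert p - q \rVert \leq \lVert F-G \rVert_{\infty}$. Hence $\inf_{q' \in R(G)} \lVert p - q' \rVert \leq \lVert F-G\rVert_{\infty}$ for every $p \in R(F)$, and by symmetry the same holds with the roles of $F$ and $G$ reversed. Taking sup over $p$ and symmetrizing gives
\begin{equation*}
d_{H}(R(F), R(G)) \leq \lVert F-G \rVert_{\infty},
\end{equation*}
which yields continuity immediately. There is no real obstacle here: the argument uses only that $\mathcal{M}_{T}$ consists of probability measures and the elementary bound $\lVert \int h \dd\mu \rVert \leq \int \lVert h \rVert \dd \mu$. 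The only thing one might be tempted to verify carefully is that $R(G)$ is compact so that the infimum is attained, but this is not needed for the Hausdorff-distance estimate.
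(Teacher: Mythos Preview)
Your proof is correct and follows essentially the same approach as the paper: both establish the $1$-Lipschitz estimate $d_{H}(R(F),R(G)) \leq \lVert F-G\rVert_{\infty}$ via the bound $\lVert \int F\,d\mu - \int G\,d\mu\rVert \leq \lVert F-G\rVert_{\infty}$ for each $\mu \in \mathcal{M}_{T}$. The only difference is that you spell out the passage to the Hausdorff distance explicitly, whereas the paper simply asserts it.
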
 
\begin{proof} Let $\mu \in \mathcal{M}_{T}$ and $F, G \in C(X, \R^{d})$, and note that:
\begin{equation*}
\begin{split}
\norm{\int F \dd \mu - \int G \dd \mu} \leq \norm{F-G}_{\infty}
\end{split}
\end{equation*}
and this immediately implies that $d_{H}(R(F), R(G))\leq \norm{F-G}_{\infty}$. \end{proof}

We end this section with a definition that we will use in the rest of the paper. Let $A \subset \R^{d}$ and $\varepsilon>0$. We define $\varepsilon$-neighbourhood of $A$ as $$B_{\varepsilon}(A)=\{x \in \R^{d} : \inf_{y \in A} \norm{x-y} < \varepsilon \}.$$

\section{Approximate Ma\~n\'e Lemma}

The \textit{Ma\~n\'e lemma} is a useful tool in ergodic optimization \cite{Sa, Bo1, CG, Je3,  Bo2}. It is stated as follows in the particular situation of expanding dynamics: let $T: X \to X$ be a expanding map and $\alpha \in (0,1]$. Then, for any $f$ in the space $C^{\alpha}(X)$ of $\alpha$-H\"{o}lder functions, there exists $h \in C^{\alpha}(X)$ such that 
$
\alpha(f) \leq f+h \circ T-h \leq \beta(f),
$
where $\alpha(f)$ and $\beta(f)$ are the minimum and maximum ergodic average, respectively.
This says that up to adding a coboundary $h-h\circ T$ to $f$ (which does not alter the integrals with respect invariant measures), we can assume that the image of $f$ is contained in the rotation set $R(f)=[\alpha(f), \beta(f)]$.

We can ask if there is an analogous of the Ma\~n\'e Lemma in the setting of vectorial potentials. Following the same spirit of the Ma\~n\'e Lemma, we say that a vectorial potential $F \in C(X,\R^{d})$ satisfies the \textit{Ma\~n\'e Lemma} if there exists $H \in C(X,\R^{d})$ such that $\Ima(F+H-H\circ T )\subset R(F)$. Even if we impose some regularity on $F$, the classical example of the fish is a H\"{o}lder function that does not satisfy the Ma\~n\'e Lemma, as noted by Bochi and Delecroix: see \cite[Proposition 2.1]{B}. 

Nevertheless, we have the following \textit{approximate Ma\~n\'e Lemma}:
\begin{lemma}\label{lemaaprox}
Let $F: X \rightarrow \R^{d}$ be a continuous function and $\varepsilon>0$. Then there exists a continuous function $G: X \to \R^{d}$ cohomologous to $F$ such that:
$$
\Ima \left ( G \right ) \subset B_{\varepsilon}(R(F)).
$$
Moreover, there exists $N_{0} \in \N$ such that $G$ can be taken to be $\bkav{F}$ for arbitrary $n \geq N_{0}$.
\end{lemma}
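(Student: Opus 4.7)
The plan is to take $G$ to be the Birkhoff average $\bkav{F}$ for suitably large $n$, and to verify both properties separately. The cohomology is straightforward and comes from an explicit telescoping; the content of the lemma lies in showing that the image of $\bkav{F}$ gets uniformly close to $R(F)$ as $n \to \infty$.

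First I would observe that $\bkav{F}$ is always cohomologous to $F$. Setting
$$
h_{n}=\tfrac{1}{n}\sum_{k=0}^{n-1}F^{(k)}\in C(X,\R^{d}),
$$
a direct computation using $F^{(k)}\circ T-F^{(k)}=F\circ T^{k}-F$ gives
$$
h_{n}\circ T-h_{n}=\tfrac{1}{n}\sum_{k=0}^{n-1}\bigl(F\circ T^{k}-F\bigr)=\bkav{F}-F,
$$
so $\bkav{F}=F+h_{n}\circ T-h_{n}$, as desired. This part is routine and works for every $n\in \N$.

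The main step is to prove that there exists $N_{0}$ such that for all $n\geq N_{0}$ and all $x\in X$ the average $\tfrac{1}{n}F^{(n)}(x)$ lies in $B_{\varepsilon}(R(F))$. I would argue by contradiction: suppose there exist $n_{k}\to\infty$ and $x_{k}\in X$ with $d\bigl(\tfrac{1}{n_{k}}F^{(n_{k})}(x_{k}),R(F)\bigr)\geq \varepsilon$. Consider the empirical measures
$$
\mu_{k}=\tfrac{1}{n_{k}}\sum_{j=0}^{n_{k}-1}\delta_{T^{j}x_{k}},
$$
which satisfy $\int F\dd\mu_{k}=\tfrac{1}{n_{k}}F^{(n_{k})}(x_{k})$. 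By weak-$\star$ compactness of probability measures on $X$, pass to a subsequence converging to some $\mu$. The standard Krylov--Bogolyubov argument shows $\mu$ is $T$-invariant: for any $\varphi\in C(X)$,
$$
\Bigl|\int\varphi\,\mathrm{d}(T_{\ast}\mu_{k})-\int\varphi\dd\mu_{k}\Bigr|=\tfrac{1}{n_{k}}\bigl|\varphi(T^{n_{k}}x_{k})-\varphi(x_{k})\bigr|\to 0.
$$
Hence $\mu\in\mathcal{M}_{T}$, and $\int F\dd\mu_{k}\to \int F\dd\mu\in R(F)$, contradicting that each $\int F\dd\mu_{k}$ lies at distance at least $\varepsilon$ from $R(F)$.

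The one subtlety I expect is purely organizational: one should be careful to distinguish the uniform condition (\emph{some} $N_{0}$ works for \emph{every} $x\in X$) from the pointwise convergence of individual Birkhoff averages guaranteed by the Birkhoff ergodic theorem. That is precisely why the argument is phrased in terms of empirical measures and a compactness/contradiction scheme, rather than invoking pointwise ergodic convergence, which would only give the weaker statement that each orbit accumulates in $R(F)$.
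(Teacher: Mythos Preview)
Your proposal is correct and follows essentially the same approach as the paper: an explicit coboundary shows $\bkav{F}$ is cohomologous to $F$, and a contradiction argument via weak-$\star$ compactness of empirical measures shows the Birkhoff averages eventually land in $B_{\varepsilon}(R(F))$. Your write-up is in fact slightly more careful than the paper's, since you work with a subsequence $n_{k}\to\infty$ and spell out the Krylov--Bogolyubov step for $T$-invariance of the limit measure.
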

Lemma \ref{lemaaprox} is well known (c.f. ``enveloping property'' \cite[p. 6]{B}), but for completeness we give a proof. Note that if $F: X \rightarrow \R^{d}$ is continuous, then $F$ is cohomologous to $\bkav{F}$  for all $n \in \N$, since $F=\bkav{F}+H-H\circ T $, where $H= \frac{1}{n} \sum_{j=1}^{n} F^{(j)}$.
\begin{proof}[Proof of Lemma \ref{lemaaprox}]
Suppose in order to get a contradiction that there exists a number $\varepsilon>0$ and a sequence $\{x_{n} \}_{n \in \N}$ such that:
$$
\bkav{F}(x_{n}) \notin B_{\varepsilon}(R(F)) . 
$$
Consider the following sequence of probability measures on $X$:
$$
\mu_{n}=\frac{\delta_{x_{n}}+\delta_{T(x_{n})}+..+\delta_{T^{n-1}(x_{n})}}{n} .
$$
By compactness of the space of probability measures there exists a subsequence $\mu_{n_{k}}$ converging to a probability measure $\mu$. It is not hard to see that $\mu$ is a $T$-invariant probability measure. Thus, by the weak-$*$ convergence, we obtain:
$$
\int F \dd \mu_{n_{k}} \rightarrow \int F \dd \mu
$$
and since $B_{\varepsilon}(R(F))^{c}$ is closed, we have $\int F \dd \mu \notin  B_{\varepsilon}(R(F))$, a contradiction. Since $F$ is cohomologous to its finite time averages, we can take $G=\bkav{F}$ for sufficiently large $n$.
\end{proof}
\section{Proof of the main result}
In this section we present the main technical ingredients in the proof of Theorem \ref{thmC} and we combine them at the end. We will always assume that $(X,T)$ is a non-uniquely ergodic topological dynamical system with dense set of periodic measures. The first technical lemma enlarges the rotation sets, without losing the control of the distance to the original potential.
.
\begin{lemma}\label{constructlemma1}
Let $F: X \rightarrow \R^{d}$ be a continuous function with $\sharp R(F) \geq 2$ and $K \in \mathit{CB}_{\circ}(\R^{d}) $ such that $R(F) \subset \innt(K)$. Let $z_{1}, ..., z_{m}$ be distinct points in $R_{\per}(F)\setminus \partial R(F)$ and $y_{1}, ... , y_{m} \in \innt(K) \setminus R(F)$ be such that $R(F) \subset \conv\{y_{1}, ..., y_{n}\}$ (see Figure \ref{fig1}). Then there exists a continuous potential $G: X \rightarrow \R^{d} $ with:
\begin{enumerate}
    \item $\norm{G-F}_{\infty} \leq \frac{7}{6}\max_{i} \norm{z_{i}-y_{i}}$, and
    \vspace{0.1cm}
    
    \item $ \conv \{ y_{1}, ... ,y_{m} \} \subset R(G) \subset \innt(K)$.
\end{enumerate}
\begin{figure}[ht]
\begin{tikzpicture}[scale=1]
\draw [rotate around={0:(1.37,0)},line width=1pt] (1.37,0) ellipse (2.9137626021290584cm and 1.9872971341590677cm);
\usetikzlibrary{patterns}
\draw [rotate around={0:(1.4144063448758233,0)},line width=0.8pt,fill=black,fill opacity=0.03] (1.4144063448758233,0) ellipse (2.0546028875538758cm and 0.9469147392639833cm);;
\draw (1.4318865584417144,0.1) node{\footnotesize{$R(F)$}};
\draw (1.8157096014692054,-1.5325348468170308) node{\small{$K$}};
\draw (0.053008719,0.4365) node[below] {\small{$z_{1}$}};
\draw (0.053008719,0.4365) node {$\bullet$};

\draw (0.153008719,1.4365) node[below] {\small{$y_{1}$}};
\draw (0.053008719,1.4365) node {$\bullet$};

\draw (1.4318865584417144,0.766272187101962) node[below] {\small{$z_{2}$}};
\draw (1.4318865584417144,0.7866272187101962) node {$\bullet$};

\draw (1.4318865584417144,1.566272187101962) node[below] {\small{$y_{2}$}};
\draw (1.4318865584417144,1.566272187101962) node {$\bullet$};

\draw (2.8318865584417144,0.3365) node[below] {\small{$z_{3}$}};
\draw (2.8318865584417144,0.3365) node {$\bullet$};

\draw (2.9318865584417144,1.1365) node[below] {\small{$y_{3}$}};
\draw (3.0318865584417144,1.1365) node {$\bullet$};

\draw (3.0318865584417144,-0.1365) node[below] {\small{$z_{4}$}};
\draw (3.0318865584417144,-0.1365) node {$\bullet$};

\draw (3.8318865584417144,-0.1365) node[below] {\small{$y_{4}$}};
\draw (3.8318865584417144,-0.1365) node {$\bullet$};

\draw (2.3318865584417144,-0.4365) node[below] {\small{$z_{5}$}};
\draw (2.3318865584417144,-0.4365) node {$\bullet$};

\draw (2.3318865584417144,-1.2365) node[below] {\small{$y_{5}$}};
\draw (2.3318865584417144,-1.2365) node {$\bullet$};

\draw (1.053008719,-0.5365) node[below] {\small{$z_{6}$}};
\draw (1.053008719,-0.5365) node {$\bullet$};

\draw (0.253008719,-1.1365) node[below] {\small{$y_{6}$}};
\draw (0.253008719,-1.1365) node {$\bullet$};

\draw (0.053008719,-0.2365) node[below] {\small{$z_{7}$}};
\draw (0.053008719,-0.2365)node {$\bullet$};

\draw (-0.993008719,-0.0365) node[below] {\small{$y_{7}$}};
\draw (-0.993008719,-0.0365) node {$\bullet$};

\draw[densely dotted] (0.053008719,1.4365) -- (1.4318865584417144,1.566272187101962);

\draw[densely dotted] (1.4318865584417144,1.566272187101962) -- (3.0318865584417144,1.1365);

\draw[densely dotted] (3.0318865584417144,1.1365) -- (3.8318865584417144,-0.1365);

\draw[densely dotted] (3.8318865584417144,-0.1365) -- (2.3318865584417144,-1.2365);

\draw[densely dotted] (2.3318865584417144,-1.2365) -- (0.253008719,-1.1365);

\draw[densely dotted] (0.253008719,-1.1365)  -- (-0.993008719,-0.0365);

\draw[densely dotted] (-0.993008719,-0.0365) -- (0.053008719,1.4365);





\end{tikzpicture}
\caption{Setting for Lemma \ref{constructlemma1} with $m=7$.}\label{fig1}
\end{figure}
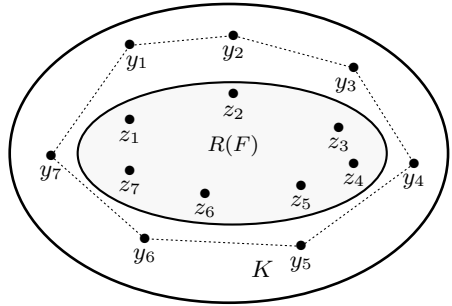
\begin{proof}
Fix $\varepsilon>0$ such that $B_{\varepsilon}(R(F)) \subset \innt(K)$ and $y_{j} \notin B_{\varepsilon}(R(F))$ for all $j=1, ..., m$. Thus, we can apply Lemma \ref{lemaaprox} to $F$ in order to obtain $n \in \N$ such that:
\begin{itemize}
    \item $ \Ima \left ( \bkav{F} \right ) \subset B_{\varepsilon}(R(F)) \subset \innt(K)$
 \item \label{terceraprop}  For each $z \in \{z_{1}, ..., z_{m}\}$ there exists a periodic point $x \in X$ such that the Birkhoff average $\bkav{F} $ equals $z$ on the orbit of $x$. We denote by $x_{j}$ the corresponding point to $z_{j}$. 
\end{itemize}
The main idea is to perturb the potential $F$ nearby the periodic orbits. For this purpose, let us choose the index set $I=\{(i,j) : 1 \leq i \leq m , 1 \leq j \leq \sharp \Orbit(x_{i}) \} $ and a collection of open balls $\{B_{(i,j)}\}_{(i,j) \in I} \subset X$ centered at the periodic points defined by:
$$
B_{(i,j)}=B_{r}(T^{j}(x_{i})) \hspace{0.3cm} \forall (i,j) \in I
$$
 and $r>0$  sufficiently small so that the collection of balls $\{B_{(i,j)}\}_{(i,j) \in I}$ are pairwise disjoint, $\bkav{F}(B_{i,j}) \subset \innt(R(F))$ and:
\begin{equation}\label{diameq}
\textrm{diam}\left (\conv\left \{\bkav{F}(B_{i,j}) \cup \{y_{i} \} \right \}\right ) \leq \frac{7}{6} \norm{y_{i}-z_{i}} 
\end{equation}
Let $B_{*}$ be the complement of $\Orbit(x_{1}) \cup ... \cup \Orbit(x_{m})$. Take a continuous partition of unity
$$
\rho_{*}+\sum_{(i,j) \in I} \rho_{i,j}=1
$$
subordinated to the open cover $B_{*} \cup \bigcup_{(i,j) \in I}B_{(i,j)}=X$.
Next, we define a function $\widetilde{G}: X \to \R^{d}$ as:
$$
\widetilde{G}(x)= \sum_{(i,j) \in I} \rho_{i.j}(x)  y_{i} + \rho_{*}(x) \bkav{F}(x).$$
We claim that $\widetilde{G}$ satisfies very similar properties as in the statement of the lemma. First, note that $\widetilde{G}$ is constant equal to $y_{i}$ on $\Orbit(x_{i})$. which implies $y_{i} \in R(\widetilde{G})$ for every $i=1, ..., m$. Therefore, $\conv\{y_{1}, ..., y_{m}\} \subset R(\widetilde{G})$. Now,
\begin{equation*}
\forall x \in X, \hspace{0.2cm} \widetilde{G}(x) \in \conv\left \{ \{y_{1}, ..., y_{m} \} \cup \Ima\left ( \bkav{F} \right )\right \} ,
\end{equation*}
since $\widetilde{G}$ is a convex combination of $y_{1}, ... ,y_{m}$ and $\bkav{F}$. The later implies:
$$
R(\widetilde{G}) \subset \conv\left \{ \{y_{1}, ..., y_{m} \} \cup \Ima\left ( \bkav{F} \right )\right \} \subset \innt{K} .
$$
Consequently, $R(F) \subset \conv \{ y_{1}, ... ,y_{m} \} \subset R(\widetilde{G}) \subset \innt(K)$. The next step is to estimate the distance between $\widetilde{G}$ and $\bkav{F}$. Let $x \in X$:
\begin{itemize}
\item If $x \in B_{i,j}$ then $\widetilde{G}(x)=\rho_{i,j}(x)y_{i}+(1-\rho_{i,j}(x))\bkav{F}(x)$, and therefore, using \eqref{diameq} $$\norm{\widetilde{G}(x)-\bkav{F}(x)}=|\rho_{i,j}(x)|\norm{y_{i}-\bkav{F}(x)}\leq \frac{7}{6} \norm{y_{i}-z_{i}} .$$ 
\item If $x \notin \bigcup_{(i,j) \in I}B_{i,j}$, then $\widetilde{G}(x)=\bkav{F}(x)$.
\end{itemize}
We conclude that $\norm{\widetilde{G}-\bkav{F}}_{\infty} \leq \frac{7}{6} \max_{i}\norm{z_{i}-y_{i}}$. Now consider
$$
G=\widetilde{G}+\left (F-\bkav{F} \right) .
$$
Recall that $F-\bkav{F}$ is a coboundary. Therefore, $G$ has the same rotation set of $\widetilde{G}$, which is sandwiched between $\conv \{y_{1}, ... ,y_{m} \}$ and $\innt(K)$. Furthermore,
\begin{equation*}
\norm{G-F}_{\infty}=\norm{\widetilde{G}-\bkav{F}}_{\infty} \leq \frac{7}{6} \max_{i}\norm{z_{i}-y_{i}} . \qedhere
\end{equation*}
\end{proof}
\end{lemma}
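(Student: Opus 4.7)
The plan is to exploit the periodic points witnessing the $z_i$'s in order to redirect the values of $F$ along those orbits toward the $y_i$'s, while keeping the global shape of the potential essentially unchanged by a coboundary trick. First I would apply the approximate Mañé Lemma (Lemma \ref{lemaaprox}) to replace $F$ by a cohomologous time average $G_0 := \bkav{F}$ whose image is contained in $B_{\varepsilon}(R(F))$; by choosing $\varepsilon$ so small that $B_{\varepsilon}(R(F)) \subset \innt(K)$ and $y_i \notin B_{\varepsilon}(R(F))$ for each $i$, the image of $G_0$ is already correctly trapped inside $\innt(K)$ and disjoint from the target points.

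The key refinement is to force $G_0$ to equal $z_i$ exactly on the orbit of $x_i$, where $x_i$ is a periodic point with $\int F \dd \mu_{\Orbit(x_i)} = z_i$. Since this integral is just the average of $F$ along the orbit, choosing $n$ both large enough (for Lemma \ref{lemaaprox}) and divisible by $\lcm$ of the periods of $x_1,\ldots,x_m$ makes $\bkav{F}(T^j(x_i)) = z_i$ for all relevant $j$. At this point one picks pairwise disjoint open balls $B_{(i,j)}$ around each point $T^j(x_i)$, small enough (by continuity of $G_0$) that $G_0(B_{(i,j)})$ lies in a tiny neighborhood of $z_i$, specifically so that $\norm{G_0(x)-z_i}\le \tfrac{1}{6}\norm{y_i-z_i}$ on $B_{(i,j)}$. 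A partition of unity $\rho_*+\sum \rho_{i,j}=1$ subordinate to $B_*\cup\bigcup B_{(i,j)}$ then lets me define
\[
\widetilde{G}(x) = \rho_*(x)\,G_0(x) + \sum_{(i,j)} \rho_{i,j}(x)\, y_i .
\]

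Verifying the conclusions becomes a sequence of short checks: (i) $\widetilde{G}$ is constantly $y_i$ on $\Orbit(x_i)$, so $y_i\in R(\widetilde G)$ via the periodic measure $\mu_{\Orbit(x_i)}$, and convexity yields $\conv\{y_1,\ldots,y_m\}\subset R(\widetilde G)$; (ii) everywhere $\widetilde G(x)$ is a convex combination of the $y_i$'s and points of $G_0$, hence $\Ima(\widetilde G)\subset \conv(\{y_1,\ldots,y_m\}\cup B_\varepsilon(R(F)))\subset \innt(K)$, giving $R(\widetilde G)\subset \innt(K)$; (iii) on $B_{(i,j)}$ the triangle inequality gives
\[
\norm{\widetilde{G}(x)-G_0(x)} \le \norm{y_i - G_0(x)} \le \norm{y_i-z_i}+\tfrac{1}{6}\norm{y_i-z_i} = \tfrac{7}{6}\norm{y_i-z_i},
\]
and outside the balls $\widetilde G=G_0$. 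Finally I set $G := \widetilde G + (F-G_0)$; since $F-G_0$ is a coboundary, $R(G)=R(\widetilde G)$ and $\norm{G-F}_\infty = \norm{\widetilde G - G_0}_\infty \le \tfrac{7}{6}\max_i\norm{y_i-z_i}$.

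The main obstacle I expect is bookkeeping the constant $\tfrac{7}{6}$: the natural estimate $\norm{y_i-G_0(x)}$ is only bounded by $\norm{y_i-z_i}$ up to an error coming from the fact that $G_0$ is merely close to $z_i$ on small balls. One has to commit to the $\tfrac{1}{6}$ slack before choosing the ball radii, which requires fixing the order of the constructions (first pick $n$ so that $G_0=z_i$ on the orbits, then pick $r$ small using continuity of $G_0$). A secondary subtlety is making sure the balls $B_{(i,j)}$ are genuinely pairwise disjoint so the partition of unity construction cleanly produces the desired pointwise values on each orbit; this only uses that the $z_i$ are distinct periodic rotation values and that $m$ is finite.
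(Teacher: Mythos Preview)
Your proposal is correct and follows essentially the same route as the paper: apply the approximate Ma\~n\'e Lemma to pass to a Birkhoff average $G_0=\bkav{F}$, perturb via a partition of unity on small disjoint balls around the periodic orbits to push the values to the $y_i$, and then undo the coboundary. Your explicit choice of $n$ divisible by the $\lcm$ of the periods and your $\tfrac{1}{6}$-slack formulation of the ball-radius condition are exactly the content the paper encodes in its diameter inequality \eqref{diameq}, so the two arguments match step for step.
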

At this moment, we have a technical tool to enlarge rotation sets and control the distance between the potentials. Now we will upgrade the previous lemma also considering the distance between the convex bodies.
\begin{lemma}\label{constructlemma2}
Let $F: X \rightarrow \R^{d}$ be a continuous function, let $K \in \mathit{CB}_{\circ}(\R^{d}) $ be such that $R(F) \subset \innt(K)$, and let $\varepsilon=d_{H}(R(F), K)$. Then there exists a continuous function $G: X \rightarrow \R^{d}$  with the following properties:
\begin{enumerate}
\item \label{constructlemma21} $R(F) \subset R(G) \subset \innt(K)$
\item \label{constructlemma22}$ \norm{G-F}_{\infty} \leq \kappa \varepsilon$
\item \label{constructlemma23} $d_{H}(R(G), K) \leq \kappa \varepsilon$
\end{enumerate}
where $\kappa=\frac{29}{30}$.
\begin{proof}
The main idea is to take a polytope which is sufficiently close to $R(F)$ and then apply Lemma \ref{constructlemma1}. But this is not sufficient to ensure that condition  (\ref{constructlemma23}) is satisfied, so we need to enlarge the polytope in order to have the new rotation set moderately close to $K$.

First suppose that $R(F)$ is not a singleton. Fix $\delta \in (0,\frac{\varepsilon}{5})$ with $\overline{B_{\delta}(R(F))} \subset \innt{K}$. Now, by \cite[Theorem 1.8.16]{S} we can take distinct points $y_{1}, ..., y_{\ell} \in B_{\delta}(R(F)) \setminus R(F)$ such that $R(F) \subset \conv \{y_{1}, ..., y_{\ell}\}$. Due to the compactness of $\partial K$, we may choose distinct points $w_{\ell+1}, ..., w_{m} \in \partial K$ such that:
$$
\partial K \subset \bigcup_{j=\ell+1}^{m} B_{\frac{\varepsilon}{4}}(w_{j}).
$$
Hence, since $d_{H}(R(F),K)=\epsilon$, there exist distinct points  $y_{\ell+1}, ..., y_{m} \in \innt(K) \setminus R(F)$ with $\norm{y_{j}-w_{j}}\leq \frac{\varepsilon}{3}$ and $d(R(F),y_{j}) \leq \frac{2\varepsilon}{3}$ for each $j=\ell+1,...,m$. Since $R_{\per}(F)$ is dense in $R(F)$ which by assumption is not a singleton, we can also find distinct points $z_{1}, ..., z_{m} \in R_{\per}(F) \setminus \partial R(F)$ such that:
$$
\norm{z_{j}-y_{j}}\leq \frac{4\varepsilon}{5}
$$
for all $j=1,..., m$. By Lemma \ref{constructlemma1}, we can perturb $F$, and obtain a continuous $G: X \rightarrow \R^{d}$ such that:

 $$\norm{G-F}_{\infty}\leq \frac{7}{6} \cdot\frac{4\varepsilon}{5}=\frac{28\varepsilon}{30}$$
 and
$$R(F) \subset \conv( \{y_{1}, ..., y_{m} \} )\subset R(G) \subset \innt(K) .$$

So conditions (\ref{constructlemma21}) and (\ref{constructlemma22}) are satisfied. In order to check the reamining condition (\ref{constructlemma23}), note first that $d_{H}(K, R(G))=d_{H}(\partial K, \partial R(G))$. Let $x \in \partial K$. Then there exists $w_{j} \in \partial K$ such that $w \in B_{\frac{\varepsilon}{4}}(w_{j})$. So:
\begin{equation*} \label{eq3}
\begin{split}
d(w,\partial R(G)) &\leq \norm{w-y_{j}} \\ &\leq \norm{w-w_{j}} + \norm{w_{j}-y_{j}}\\
&\leq \frac{\varepsilon}{4}+\frac{\varepsilon}{3} \\
&\leq \frac{28 \varepsilon}{30}.
\end{split}
\end{equation*}
Therefore $d_{H}(\partial K, \partial R(G))\leq \frac{28}{30}\varepsilon$ and this implies condition (\ref{constructlemma23}).\\ 
For the case when $R(F)$ is a singleton,  consider a continuous perturbation $F{'}$ of $F$ near two disjoint periodic orbits, say $\Orbit(x_{1})$ and $\Orbit(x_{2})$, such that:
\begin{itemize}
\item $\int F{'} \dd \mu_{\Orbit(x_{1})} \neq \int F{'} \dd \mu_{\Orbit(x_{2})}$
\item $R(F{'}) \subset \innt(K)$
\item $\norm{F-F{'}}_{\infty} \leq 0.01 \varepsilon$
\item $d_{H}(R(F{'}),K) \leq 1.01\varepsilon$ 
\end{itemize}
and apply the same procedure as before to $F^{'}$.
\end{proof}
\end{lemma}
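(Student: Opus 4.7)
The plan is to reduce this lemma to an application of Lemma \ref{constructlemma1} by carefully selecting its parameters. Two preliminary matters must be dealt with. First, Lemma \ref{constructlemma1} requires $\sharp R(F) \geq 2$ together with a dense supply of points in $R_{\per}(F) \setminus \partial R(F)$, which is empty if $R(F)$ reduces to a single point; that degenerate case would be handled by a small preliminary perturbation of $F$ near two disjoint periodic orbits, producing $F'$ with non-degenerate rotation set still in $\innt(K)$ and with $d_H(R(F'), K)$ only marginally larger than $\varepsilon$. Second, I need to pick the $y_j$ inside $\innt(K) \setminus R(F)$ and the $z_j$ inside $R_{\per}(F) \setminus \partial R(F)$ as distinct points, which is a genericity issue easily overcome.

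Assuming $R(F)$ is non-degenerate, I would split the auxiliary points $y_1, \ldots, y_m$ into two families. The first family $y_1, \ldots, y_\ell$ is taken in a very thin exterior shell of $R(F)$ so that $R(F) \subset \conv\{y_1, \ldots, y_\ell\}$, which is possible by the standard polytope approximation of a convex body from outside \cite[Theorem 1.8.16]{S}. The second family $y_{\ell+1}, \ldots, y_m$ is obtained by choosing a net $\{w_j\}$ on $\partial K$ of mesh $\eta$ and pushing each $w_j$ a distance $\alpha$ toward $R(F)$. This push accomplishes two goals: it places $y_j$ strictly inside $K$ and it brings $y_j$ within $\varepsilon - \alpha$ of $R(F)$, giving a uniform bound $d(y_j, R(F)) \leq \varepsilon - \alpha$. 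Using the density of $R_{\per}(F)$ in $R(F)$ I would then select $z_j$ with $\norm{z_j - y_j}$ just above $d(y_j, R(F))$: essentially negligible for the first family and at most $\varepsilon - \alpha$ plus slack for the second.

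Applying Lemma \ref{constructlemma1} then produces $G$ with $R(F) \subset \conv\{y_j\} \subset R(G) \subset \innt(K)$, giving (1); the bound $\norm{G-F}_\infty \leq \frac{7}{6}\max_j \norm{z_j - y_j}$ yields (2); and for (3) I would exploit $d_H(R(G), K) = d_H(\partial R(G), \partial K)$, observing that any $w \in \partial K$ lies within $\eta$ of some $w_j$ and hence within $\eta + \alpha$ of $y_j \in R(G)$. The main obstacle is purely quantitative: to obtain $\kappa < 1$ in \emph{both} (2) and (3) simultaneously, $\eta$ and $\alpha$ must be tuned so that $\frac{7}{6}(\varepsilon - \alpha)$ and $\eta + \alpha$ are both below $\kappa\varepsilon$. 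The former prevents $\alpha$ from being too small, the latter caps $\eta + \alpha$; concrete choices such as $\eta = \varepsilon/4$, $\alpha = \varepsilon/3$ leave a comfortable margin and match the stated $\kappa = 29/30$ after absorbing the small multiplicative loss incurred by the singleton reduction.
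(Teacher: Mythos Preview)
Your proposal is correct and follows essentially the same route as the paper: split the $y_j$ into a first family forming a tight outer polytope of $R(F)$ (via \cite[Theorem 1.8.16]{S}) and a second family obtained by taking an $\eta$-net $\{w_j\}$ on $\partial K$ and pushing each $w_j$ a distance $\alpha$ toward $R(F)$, then apply Lemma~\ref{constructlemma1} and tune $\eta,\alpha$; your suggested values $\eta=\varepsilon/4$, $\alpha=\varepsilon/3$ are exactly the ones the paper uses, and the singleton case is handled identically by a tiny preliminary perturbation.
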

As a straightforward consequence of the previous lemma, we have the following proposition:
\begin{prop}\label{propbacan}
Let $F: X \rightarrow \R^{d}$ be a continuous function and let $K \in \mathit{CB}(\R^{d}) $ such that $R(F) \subset \relinnt(K)$. Then, there exists a continuous function $G: X \rightarrow \R^{d}$ such that $\norm{F-G}_{\infty} \leq Cd_{H}(R(F), K)$ and $R(G)=K$, where $C=30$.
\end{prop}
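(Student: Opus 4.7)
The plan is to iterate Lemma \ref{constructlemma2}, exploiting the contraction factor $\kappa = 29/30 < 1$ to produce a uniformly convergent sequence of potentials whose rotation sets exhaust $K$.

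I would first handle the non-degenerate case $K \in \mathit{CB}_\circ(\R^d)$. Starting from $F_0 := F$, recursively define $F_{n+1}$ as the output of Lemma \ref{constructlemma2} applied to the pair $(F_n, K)$. This is well-posed because property (\ref{constructlemma21}) of the lemma preserves the hypothesis $R(F_n) \subset \innt(K)$ at every step. By induction, setting $\epsilon_n := d_H(R(F_n), K)$, one obtains $\epsilon_n \leq \kappa^n \epsilon_0$ and $\norm{F_{n+1} - F_n}_\infty \leq \kappa^{n+1} \epsilon_0$. Summing the geometric series yields $\norm{F_n - F_0}_\infty \leq \tfrac{\kappa}{1-\kappa}\epsilon_0 = 29\epsilon_0$, so $\{F_n\}$ is Cauchy in $(C(X,\R^d),\norm{\cdot}_\infty)$ and admits a uniform limit $G$ with $\norm{G-F}_\infty \leq 29\, d_H(R(F),K) \leq 30\, d_H(R(F),K)$. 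Continuity of the map $R$ (Proposition \ref{contfish}) combined with $d_H(R(F_n),K)\leq \kappa^n\epsilon_0 \to 0$ then forces $R(G) = K$.

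The one point requiring care is the degenerate case $\innt(K) = \emptyset$, where Lemma \ref{constructlemma2} does not apply directly; I would reduce to the non-degenerate case. Both $R(F)$ and $K$ lie in the affine hull $A := \mathrm{aff}(K)$, and after translating $F$ by a fixed vector in $A$ we may assume $A$ is a linear subspace of $\R^d$. Decomposing $F = F_A + F_{A^\perp}$ orthogonally, the hypothesis $R(F) \subset A$ forces $\int F_{A^\perp} \dd \mu = 0$ for every $\mu \in \mathcal{M}_T$. Regarding $F_A$ as a function into $A \cong \R^{\dim A}$ (in which $K$ has non-empty interior), the non-degenerate construction yields $\widetilde{G} : X \to A$ with $R(\widetilde{G}) = K$ and $\norm{\widetilde{G} - F_A}_\infty \leq 30\, d_H(R(F),K)$. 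The desired potential is then $G := \widetilde{G} + F_{A^\perp}$: the vanishing of the $A^\perp$-integrals yields $R(G) = R(\widetilde{G}) = K$, while the norm estimate is unchanged by the orthogonal decomposition. No serious obstacle arises; the hard work is already packaged inside Lemma \ref{constructlemma2}, and the proposition is essentially its geometric-series limit, the only mildly delicate bookkeeping being the affine-hull reduction in the degenerate case and verifying the constant $C = 30$.
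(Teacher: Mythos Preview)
Your argument is correct and follows the same strategy as the paper: iterate Lemma \ref{constructlemma2} to produce a Cauchy sequence with the geometric bound $\kappa/(1-\kappa)=29$, pass to the limit using Proposition \ref{contfish}, and reduce the degenerate case to the full-dimensional one via the affine hull of $K$. Your orthogonal decomposition $F=F_A+F_{A^\perp}$ in the degenerate case is in fact more careful than the paper's version, which simply asserts that ``we can consider $F$ as a function taking values in $\mathcal{P}(K)$'' without justifying how to discard the transverse component.
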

\begin{proof}
We divide the proof in two cases. First suppose that $\innt(K) \neq \emptyset$. Apply Lemma \ref{constructlemma2} recursively  to obtain a sequence of continuous functions $F_{n}: X \rightarrow \R^{d}$ such that:
\begin{itemize}
\item $d_{H}(F_{n},K)\leq \kappa^{n}d_{H}(R(F),K) $
\item $\norm{F_{n}-F_{n+1}}_{\infty} \leq \kappa^{n}d_{H}(R(F),K)$
\end{itemize}
where $F_{1}=F$. Then $\{ F_{n} \}_{n \in \N}$ is a Cauchy sequence, and therefore converges to a continuous function $G: X \rightarrow \R^{d}$ which satisfies:
$$
\norm{F-G}_{\infty} \leq \sum_{j=1}^{\infty} \norm{F_{j+1}-F_{j}}_{\infty} \leq \sum_{j=1}^{\infty}\kappa^{j}d_{H}(R(F),K) \leq 30 d_{H}(R(F),K).
$$ By Proposition \ref{contfish}, the map $R: C(X, \R^{d}) \rightarrow \mathit{CB}(\R^{d})$ is continuous, and so:
$$
R(G)=R(\lim F_{n})=\lim R(F_{n})=K. 
$$
Thus, the proof of the first case is finished. Now suppose that $\innt(K)=\emptyset$. Let $\mathcal{P}(K)$ be the least affine hyperspace passing through $K$. We can consider $F$ as a function taking values in $\mathcal{P}(K)$ and this affine hyperplane can be identified with $\R^{\ell}$, where $\ell=\dim \mathcal{P}(K)$. In this situation we can see $K$ as a subset of this $\R^{\ell}$ with $\innt(K) \neq \emptyset$.  Consequently, the proof is reduced to the first case. \qedhere
\end{proof}
Now we need an adjustment in order to drop the hypothesis $R(F) \subset \relinnt{K}$.
\begin{lemma}\label{adjustmentlemma}
Let $F: X \rightarrow \R^{d}$ be a continuous function, $K \in \mathit{CB}(\R^{d})$, and $\varepsilon>0$. Suppose that $d_{H}(R(F), K) \leq \varepsilon$. Then there exists a continuous function $F{}': X \rightarrow \R^{d}$ with:
\begin{itemize}

\item[1)] $R(F{}') \subset \relinnt(K)$ 
\item[2)] $\norm{F-F{}'}_{\infty} \leq 2 \varepsilon$
\item[3)] There exists a continuous function $F{}'': X \to \R^{d}$ cohomologous to $F{}'$ such that $\Ima(F{}'') \subset \mathcal{P}(K)$, where $\mathcal{P}(K)$ is the least affine hyperspace containing K.
\end{itemize}
\end{lemma}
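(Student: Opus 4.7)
The plan is to combine Lemma \ref{lemaaprox} (approximate Mañé lemma) with Lemma \ref{preadjustlemma} (controlled shrinking of a convex body). Roughly, I will replace $F$ by a cohomologous Birkhoff average $G = \bkav{F}$ whose image is squeezed close to $R(F)$, then project $G$ pointwise onto a slightly shrunken copy $K_\delta$ of $K$, and finally restore the coboundary so that the resulting $F'$ stays close to $F$ in sup norm. The projection itself will serve as the function $F''$ required in (3).

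To begin, I will fix $\delta \in (0, \varepsilon/2)$. Applying Lemma \ref{preadjustlemma} to $K$ produces $K_\delta \in \mathit{CB}(\R^d)$ with $K_\delta \subset \relinnt(K)$ and $d_H(K_\delta, K) < \delta$. Applying Lemma \ref{lemaaprox} to $F$ with parameter $\delta$ yields an integer $n$ for which $G := \bkav{F}$ satisfies $\Ima(G) \subset B_\delta(R(F))$, together with the identity $F = G + H - H \circ T$ for the continuous function $H = \frac{1}{n}\sum_{j=1}^{n} F^{(j)}$. Denoting by $\pi_{K_\delta} : \R^d \to \R^d$ the (1-Lipschitz) nearest-point projection onto the closed convex set $K_\delta$, I set
\[
F'' := \pi_{K_\delta} \circ G, \qquad F' := F'' + H - H \circ T.
\]
Then $F''$ is continuous with image in $K_\delta \subset \mathcal{P}(K)$ and is cohomologous to $F'$, which gives (3). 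Moreover $R(F') = R(F'') \subset K_\delta \subset \relinnt(K)$ since $K_\delta$ is convex, giving (1).

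Verifying condition (2) is the only computation left: since $F - F' = G - F''$, we have $\|F - F'\|_\infty = \sup_x d(G(x), K_\delta)$, and for each $x \in X$ I chain bounds from the three previous choices: $G(x)$ lies within $\delta$ of some $y \in R(F)$, $y$ lies within $\varepsilon$ of some $z \in K$, and $z$ lies within $\delta$ of some $w \in K_\delta$, so $d(G(x), K_\delta) < \varepsilon + 2\delta \leq 2\varepsilon$.

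The only conceptual subtlety, more a design choice than an obstacle, is that the projection onto $K_\delta$ must be applied to the cohomologous representative $G$ (whose image is controlled by the approximate Mañé lemma) rather than to $F$ itself, whose values can be arbitrarily far from $K$. Adding the coboundary $H - H \circ T$ back to the projection transfers the pointwise control to $F'$ without changing its rotation set. With this coupling in place, the verification of all three conditions is essentially routine, and I do not anticipate any further technical difficulty.
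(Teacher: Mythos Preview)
Your proof is correct and follows essentially the same route as the paper: apply the approximate Ma\~n\'e lemma to replace $F$ by a Birkhoff average $G=\bkav{F}$ with controlled image, project $G$ onto a shrunken copy of $K$ obtained from Lemma~\ref{preadjustlemma}, and add back the coboundary. The only cosmetic slip is that Lemma~\ref{preadjustlemma} requires $0<\delta<1$, so you should take $\delta\in(0,\min\{\varepsilon/2,1\})$; with that, your parameter choices actually give a cleaner bound ($\varepsilon+2\delta<2\varepsilon$) than the paper's own bookkeeping.
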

\begin{proof}
The strategy is similar of the proof Lemma \ref{constructlemma1}. Apply Lemma \ref{lemaaprox} to $F$ and $\varepsilon>0$ to obtain $n \in \N$ with $\Ima \left (\frac{F^{(n)} }{n} \right ) \in B_{\varepsilon}(R(F)) $. Also, apply Lemma \ref{preadjustlemma} to $K$ and $\delta=\min \{ \varepsilon, \frac{1}{2} \}$ to find $L \in \mathit{CB}(\R^{d})$ with $L \subset \relinnt(K)$ and $d_{H}(K,L)\leq \delta$. Define $F{}''$ as:
$$
F{}''= P_{L}\left (\bkav{F} \right ),
$$
where $P_{L}$ is the projection onto $L$, that is, the map which sends each point of the space to its closest point in $L$. Since $P_{L}$ is Lipschitz, the function $F{}''$ is continuous. Also $R(F{}'') \subset \relinnt(K)$, so the next step is to estimate $d_{H}(R(F{}''),K)$. Given $y \in K$, due to the denseness of $R_{\per}(\bkav{F})$ in $R(F)$,  there exists $z \in R_{\per}(\bkav{F})$ such that $\norm{y-z} \leq 2\varepsilon $. Let $\Orbit(x)$ be the corresponding periodic orbit. We note that:

\begin{equation*}
\begin{split}
\norm{y-\int F{}'' \dd \mu_{\Orbit(x)}} &\leq \norm{y-z}+\norm{z-\int F{}'' \dd \mu_{\Orbit(x)}} \\
&\leq 2\varepsilon + \norm{\int\bkav{F}-F{}'' \dd \mu_{\Orbit(x)}} \\
& \leq 2\varepsilon + \int 2\varepsilon \dd \mu_{\Orbit(x)}\\
 &\leq 4\varepsilon ,
\end{split}
\end{equation*}
since  $\norm{\bkav{F}-F{}''}_{\infty}\leq 2\varepsilon$. From above we get that $d_{H}(K, R(F{}'')) \leq 4\varepsilon$. Now, it suffices to consider $F{}'= F{}''+(F-\bkav{F})$, which is cohomologous to $F{}''$. Finally,
\begin{equation*}
\norm{F-F{}'}_{\infty}=\norm{F{}''-\bkav{F}}_{\infty}\leq 2\varepsilon. \qedhere
\end{equation*}
\end{proof}

Now we are ready to prove Theorem \ref{thmC}.
\begin{proof}[Proof of Theorem \ref{thmC}]
Let $F \in C(X, \R^{d})$ and $\varepsilon >0$. Let $K \in \mathit{CB}(\R^{d})$ such that $d_{H}(K, R(F)) \leq \varepsilon$. Let $F{}'$ and $F{}''$ given by Lemma \ref{adjustmentlemma}. Then apply Proposition \ref{propbacan} to $F{}''$ in order to obtain a continuous function $\widetilde{G}: X \rightarrow \R^{d}$ with the properties that $R(\widetilde{G})=K$ and $\norm{F{}''-\widetilde{G}}_{\infty} \leq 4 C\varepsilon$. So, we define:
$$
G=\widetilde{G}+(F{}'-F{}'') .
$$
Hence $R(G)=K$, since  $F{}'$ is cohomologous to $F{}''$. Moreover,
$$\norm{G-F{}'}_{\infty}=\norm{\widetilde{G}-F{}''}_{\infty} \leq 4C\varepsilon .$$
Therefore:
$$
\norm{F-G}_{\infty} \leq \norm{F-F{}'}_{\infty}+\norm{F{}'-G}_{\infty} \leq 2\varepsilon+4C\varepsilon=(2+4C)\varepsilon.
$$
We have just proved that
$$
R(\overline{B}_{(2+4C)\varepsilon}(F)) \supset \overline{B}_{\varepsilon}(R(F)) ,
$$
and this inclusion implies the openness of $R$.
The surjectivity follows directly from Proposition \ref{propbacan}. Let $K \in \mathit{CB}(\R^{d})$, $v \in \relinnt(K)$ and $F \equiv v$. Thus, applying Proposition \ref{propbacan} to $F$, we get a continuous function $G \in C(X, \R^{d})$ such that $R(G)=K$.
\end{proof}
\section{Directions for further research}
In this section we discuss related problems and open questions.
\subsection{The uniqueness property} Let $F \in C(X,\R^{d})$. We say that $F$ satisfies the \textit{uniqueness property} if for each $v \in \partial R(F)$, there exists a unique $\mu \in \mathcal{M}_{T}$ for which $\int F \dd \mu = v$. As mentioned in the introduction, in the one-dimensional case, generic functions $f \in C(X,\R)$ satisfy the uniqueness property. So we ask:
\begin{qstn}\label{q1}
Is it true that generic functions $F \in C(X,\R^{d})$ satisfy the uniqueness property?
\end{qstn}
Of course, we can replace $C(X,\R^{d})$ for other spaces of functions. Following the proof in the one-dimensional case in \cite[Theorem 3.2]{Je4}, one can show the following:
\begin{prop}
The set of $F \in C(X,\R^{d})$ which satisfy the uniqueness property is a $G_{\delta}$ set.
\end{prop}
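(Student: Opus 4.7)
The plan is to realize the uniqueness set as a countable intersection of open subsets of $C(X,\R^{d})$, following the classical approach used in dimension one \cite[Theorem 3.2]{Je4}. The idea is to construct a single real-valued ``defect'' functional whose vanishing characterizes the uniqueness property, and to prove that it is upper semi-continuous; since USC functions have open sublevel sets, the uniqueness set is then automatically a $G_{\delta}$.

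Fix a metric $d_{w}$ on $\mathcal{M}_{T}$ compatible with the weak-$*$ topology. For $F \in C(X,\R^{d})$ define
$$
\Delta(F) = \max \left\{ d_{w}(\mu_{1}, \mu_{2}) : \mu_{1}, \mu_{2} \in \mathcal{M}_{T},\ \int F \dd \mu_{1} = \int F \dd \mu_{2} \in \partial R(F) \right\}.
$$
The feasible set is a closed subset of the compact metric space $\mathcal{M}_{T} \times \mathcal{M}_{T}$ (since $\partial R(F)$ is closed and the barycenter map $\mu \mapsto \int F \dd \mu$ is weak-$*$ continuous), and it is non-empty: take $\mu_{1} = \mu_{2}$ to be any invariant measure whose barycenter lies on $\partial R(F)$. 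Hence the maximum is attained, and directly from the definition $\Delta(F) = 0$ if and only if $F$ has the uniqueness property.

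The substantive step is to show that $\Delta$ is upper semi-continuous. Let $F_{k} \to F$ uniformly and $\Delta(F_{k}) \to a$, and choose realizers $\mu_{k,1}, \mu_{k,2}$ with common barycenter $v_{k} := \int F_{k} \dd \mu_{k,1} = \int F_{k} \dd \mu_{k,2} \in \partial R(F_{k})$. By weak-$*$ compactness of $\mathcal{M}_{T}$ and boundedness of $\bigcup_{k} R(F_{k})$, a subsequence satisfies $\mu_{k,i} \to \mu_{i}$ and $v_{k} \to v$. Three limit arguments then yield $d_{w}(\mu_{1}, \mu_{2}) = a$ (by continuity of $d_{w}$), $\int F \dd \mu_{i} = v$ (using $\|F_{k} - F\|_{\infty} \to 0$ together with weak-$*$ convergence), and $v \in \partial R(F)$; this last point combines Proposition~\ref{contfish} (Hausdorff continuity of $R$) with the identity $d_{H}(K, L) = d_{H}(\partial K, \partial L)$ recalled in Section~\ref{sec2}, the case $\innt(R(F)) = \emptyset$ being trivial because then $\partial R(F) = R(F)$. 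Therefore $a \leq \Delta(F)$, proving upper semi-continuity.

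It follows that each $U_{n} = \{F \in C(X,\R^{d}) : \Delta(F) < 1/n\}$ is open, and the uniqueness set equals $\bigcap_{n \in \N} U_{n}$, which is $G_{\delta}$. I expect the only delicate step to be the verification that $v \in \partial R(F)$ in the upper semi-continuity argument; this reduces to the convex-body identity $d_{H}(K, L) = d_{H}(\partial K, \partial L)$ already used earlier in the paper, so no new technical input is required.
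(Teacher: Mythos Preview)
Your proposal is correct and carries out precisely the adaptation of Jenkinson's one-dimensional argument that the paper alludes to (the paper itself does not write out a proof, only the reference to \cite[Theorem 3.2]{Je4}). The one point worth flagging is the passage $v_{k}\in\partial R(F_{k})\Rightarrow v\in\partial R(F)$, which you correctly identify as the only nontrivial step and handle via $d_{H}(K,L)=d_{H}(\partial K,\partial L)$; this is exactly the input needed, so nothing is missing.
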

Therefore in order to give a positive answer to  Question $\ref{q1}$, it is sufficient to prove denseness.
\subsection{The map $R(\cdot)$ is not open in general} \label{sec6.2} It is natural to ask if the map $R(\cdot)$ is open if we replace $C(X,\R^{d})$ by other spaces of functions. The answer is negative in the space of Lipschitz functions: Let $\mathrm{Lip}(X, \R^{2})$ be the subspace of Lipschitz potentials endowed with the Lipschitz norm
$$
\norm{f}_{\Lip}=\norm{f}_{\infty}+\sup_{x \neq y} \frac{|f(x)-f(y)|}{d(x,y)} .
$$
Then, we have the following:
\begin{prop}\label{propnoabierto}
Suppose that $T$ has a fixed point $x_{0}$. Then there exists an open set $U \subset \mathrm{Lip}(X, \R^{d})$ such that for all $F \in U$, $\partial R(F)$ is non-differentiable. In particular, the restriction $R|_{\mathrm{Lip}(X, \R^{2})}$ is not open.
\end{prop}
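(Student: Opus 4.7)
The plan is to let $U$ be a small Lipschitz ball around the explicit potential
$F_0(x) := (d(x, x_0), 0, \ldots, 0) \in \mathrm{Lip}(X, \R^d)$
and to show that for every $F \in U$ the point $v := F(x_0)$ is a non-smooth boundary point of $R(F)$, forcing $\partial R(F)$ to fail to be $C^1$. Fix $\delta \in (0,1)$ and put $U := \{F : \|F - F_0\|_{\Lip} < \delta\}$.

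The key observation is a Lipschitz cone estimate. Writing $F = F_0 + H$, the Lipschitz seminorm of $H$ is less than $\delta$, so $\|H(x) - H(x_0)\| \leq \delta\, \phi(x)$ pointwise, where $\phi(x) := d(x, x_0)$. Since $v = H(x_0)$, integrating against any $\mu \in \mathcal{M}_T$ and using Jensen's inequality, the first coordinate of $\int F \dd\mu - v$ lies in $[(1-\delta)s(\mu),\,(1+\delta)s(\mu)]$ and the remaining coordinates are bounded by $\delta s(\mu)$ in absolute value, where $s(\mu) := \int \phi \dd\mu \geq 0$. Consequently,
$$R(F) - v \;\subseteq\; C := \bigl\{ w \in \R^d : w_1 \geq 0,\ \sqrt{w_2^2 + \cdots + w_d^2} \leq \tfrac{\sqrt{d-1}\,\delta}{1-\delta}\, w_1 \bigr\},$$
a proper convex cone strictly smaller than any closed half-space.

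Since $R(F) - v \subseteq C$ and $0 \in R(F) - v$, the normal cone of $R(F)$ at $v$ contains the polar cone $C^\circ = \{a \in \R^d : a_1 \leq -\tfrac{\sqrt{d-1}\,\delta}{1-\delta}\|(a_2,\ldots,a_d)\|\}$, which is non-degenerate (not a single ray) for $\delta < 1$. Hence $v$ admits multiple non-parallel outward normals and is a corner of $R(F)$; in particular $\partial R(F)$ cannot be $C^1$ at $v$. The degenerate case where $R(F)$ has empty interior is handled trivially, since a lower-dimensional convex body is not a $C^1$ hypersurface. For the ``in particular'' clause, if $R|_{\Lip}$ were open then $R(U) \subset \mathit{CB}(\R^2)$ would be a non-empty open set in the Hausdorff metric consisting entirely of non-$C^1$ convex bodies, contradicting the density of smooth convex bodies in $\mathit{CB}(\R^2)$.

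I expect the main effort to be purely organizational: writing out the cone estimate cleanly and invoking the standard convex-analytic fact that a convex body with a non-degenerate normal cone at a boundary point fails to be $C^1$ there. No dynamical input is used beyond the existence of the invariant measure $\delta_{x_0}$ (which pins $v$ in $R(F)$) and the fact that $\phi = d(\cdot, x_0)$ is a non-negative $1$-Lipschitz function vanishing exactly at $x_0$.
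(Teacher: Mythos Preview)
Your argument is correct and follows the same core idea as the paper: center the Lipschitz ball at a potential built from the distance function $\phi(x)=d(x,x_0)$, and show that for every nearby $F$ the rotation set lies in a proper cone with apex at $F(x_0)$, forcing a corner there. The paper works in $\R^2$ with $F(x)=(0,-2\phi(x))$ and checks directly that the two linear functionals $(1,1)$ and $(1,-1)$ are both maximized at $\delta_{x_0}$; you instead take $F_0=(\phi,0,\ldots,0)$ in $\R^d$ and phrase the conclusion via the inclusion $C^\circ\subseteq N_{R(F)}(v)$. These are the same argument in different dress, but your version has the small advantage of treating general $d$ as stated in the proposition (the paper's proof, as written, only does $d=2$). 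One cosmetic remark: the factor $\sqrt{d-1}$ in your cone $C$ is unnecessary, since Jensen already gives $\|(w_2,\ldots,w_d)\|\le \tfrac{\delta}{1-\delta}\,w_1$ directly; this does no harm.
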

\begin{proof}
The proof follows the same spirit as \cite[Proposition 4.12]{Je1}. Let $F(x)=(0,-2d(x,x_{0}))$ and $U=B_{\frac{1}{2}}(F)$. Let $G \in \mathrm{Lip}(X, \R^{2})$ be a Lipschitz perturbation of $F$ with $\norm{G}_{\mathrm{Lip}}<\frac{1}{2}$. We claim that $(F+G)(x_{0})$ is a corner of $R(F+G)$. Since the rotation map is equivariant with respect to translations, we can assume that $G(x_{0})=(0,0)$. Thus,
$$
(1,1) \cdot (F+G)(x) \leq -2d(x,x_{0})+\sqrt{2}G(x) \leq -2d(x,x_{0})+\frac{\sqrt{2}}{2}d(x,x_{0}) \leq 0
$$
Analogously $(1,-1) \cdot (F+G)(x) \leq 0$. We conclude that $\delta_{x_{0}}$ is a maximizing measure for $(1,\pm 1) \cdot (F+G)$, thus:
$$
\int (F+G) \dd \delta_{0}=(0,0)
$$
is a corner for $R(F+G)$, because $R(F+G)$ contains $(0,0)$ and is contained in the cone $\{(x,y) \in \R^{2} : y \leq - |x| \}$ with vertex $(0,0)$. Since convex bodies with $C^{1}$ boundary is dense, we conclude that $R|_{\mathrm{Lip}(X, \R^{2})}$ is not open at $F$.
\end{proof}
From this proposition, we also conclude that differentiability of the rotation set boundary is not generic when we consider the space of Lipschitz functions.

\subsection{Genericity result for other spaces}
In this article we considered the case of continuous potentials. We propose to investigate the same question for other spaces of functions and other dynamics:
\begin{qstn}
Is it true that the rotation set is strictly convex for generic potentials in some dense subspace of $C(X, \R^{d})$ ?
\end{qstn}
For example, replace $C(X,\R^{d})$ by the space of $\alpha$-H\"{o}lder potentials $C^{\alpha}(X,\R^{d})$ with the H\"{o}lder norm. Also, in view of the fish example and Proposition \ref{propnoabierto}, it seems that if we impose regularity to the potential, then the corresponding rotation set $R(F)$ is going to have a considerable number of corners in the boundary. So, we pose the following:
\begin{qstn}
It is true that the boundary of rotation set has a (full measure) dense subset of corners for generic potentials in $C^{\alpha}(X, \R^{d})$ ?
\end{qstn}
For more discussion, see \cite[Section 2]{B}.

\hspace{-4.3mm}\textbf{Acknowledgment}\hspace{2mm} I am grateful to my advisor J. Bochi for very valuable discussions and corrections throughout all this work. I would also like to thank the referee for their useful corrections and suggestions. This article was supported by CONICYT scholarship 22181136 and CONICYT PIA ACT172001.

\small{Sebasti\'an Pavez-Molina (\texttt{snpavez@uc.cl})}\\
\small{Facultad de Matem\'aticas}\\
\small{Pontificia Universidad Cat\'olica de Chile}\\
\small{Av. Vicu\~na Mackenna 4860 Santiago Chile}


\begin{thebibliography}{KLP2}

\bibitem[B]{B}
\textsc{Bochi, J.} --
Ergodic optimization of Birkhoff averages and Lyapunov exponents. \textit{Proc. Int. Cong. of Math. - 2018 Rio de Janeiro} Vol. 2, 1821--1842

\MRbibitem[BCH]{3502068}{BCH}
\textsc{Boyland, P.; Carvalho, A; Hall T.} --
New rotation sets in a family of torus homeomorphisms \textit{Inventiones Mathematicae} 204 (2016), 895--937.


\MRbibitem[Bo1]{1785392}{Bo1}
\textsc{Bousch, T.} --
Le poisson n'a pas d'ar\^{e}tes. \textit{Ann. Inst. H. Poincar\'e Probab. Statist.} 36 (2000), 489--508

\MRbibitem[Bo2]{1936826}{Bo2}
\textsc{Bousch, T.} --
Um lemme de Ma\~n\'e bilat\'eral. \textit{CRAS 335} (2002), 533--536.


\bibitem[CG]{CG}
\textsc{Conze, J.P.; Guirvac'h, Y.} --
Croissance des sommes ergodiques et principe variationnel. Unpublished manuscript, circa 1993.


\MRbibitem[GK]{3820000}{GK}
\textsc{Gelfert, K.; Kwietniak, D.}, 
On density of ergodic measures and generic points. 
\textit{Ergodic Theory Dynam. Systems} 
38 (2018), no. 5, 1745--1767



\MRbibitem[Je1]{1780215}{Je4}
\textsc{Jenkinson, O. },
\textit{Conjugacy rigidity, cohomological triviality, and barycentres
of invariant measures.} PhD thesis, Univ. of Warwick, 1996.

\MRbibitem[Je2]{2191393}{Je1} \textsc{Jenkinson, O.} --Ergodic Optimization. \textit{Discr. Cont. Dyn. Syst.} 15 (2006), 197--224

\MRbibitem[Je3]{2226487}{Je2}\textsc{Jenkinson, O.} --Every ergodic measure is uniquely maximizing. \textit{Discr. Cont. Dyn. Syst.} 16 (2006), 383--392

\MRbibitem[Je4]{4000508}{Je3}
\textsc{Jenkinson, O.} --
Ergodic optimization in dynamical systems.
\textit{Ergodic Theory Dynam. Systems} 39 (2019), no. 10, 2593--2618



\MRbibitem[KW]{3219558}{KW}
\textsc{Kucherenko, T., Wolf}, C. Geometry and entropy of generalized rotation sets. 
\textit{Israel J. Math.}
199 (2014), 791 -- 829

\MRbibitem[KliN]{631603}{KlN}
\textsc{Klima, V. ;  Netuka, I.} --
Smoothness of a typical convex function.
\textit{Czecboslovak Math. J.} 31(106), 569--572.

\MRbibitem[MK]{1053617}{MK}
\textsc{Misiurewicz, M.; Ziemian K.} --
Rotation sets for maps of tori.
\textit{J. London Math. Soc. (2)} 40 (1989), no. 3, 490--506

\MRbibitem[P]{3174742}{P}
\textsc{Passegi, A.} --
Rational polygons as rotation sets of generic homeomorphisms of the two torus.
\textit{J. London Math. Soc.} (1) 89 (2014), 235 -- 254.

\MRbibitem[Sa]{1724277}{Sa}
\textsc{Savchenko, S. V. } -- Homological inequalities for finite topological Markov chains. \textit{Funct. Anal. Appl.} 33 (1999), no. 3, 236--238

\bibitem[S]{S}
\textsc{Schneider, R.} \textit{Convex bodies: the Brunn-Minkowski theory.} 
Cambridge university press, 2014.






\MRbibitem[Zi]{1314983}{Zi}
\textsc{Ziemian, K.}
Rotation sets for subshifts of finite type. \textit{Fund. Math.} 146 (1995), no. 2, 189--201


\end{thebibliography}
\end{document}